\documentclass[a4 paper,12pt,bezier]{article}
\usepackage[top=3cm,bottom=3cm,left=2.5cm,right=2.5cm]{geometry}
\usepackage{amsmath}
\usepackage{amsfonts,amsthm,amssymb}
\usepackage{mathrsfs}
\usepackage{amsfonts}
\usepackage{graphics,subfigure,caption2}
\usepackage{tikz, color}
\usepackage[colorlinks=true,anchorcolor=blue,filecolor=blue,linkcolor=blue,urlcolor=blue,citecolor=blue]{hyperref}
\usepackage{graphics,epstopdf}
\usepackage{latexsym,bm}
\usepackage{amsfonts,amsthm,amssymb,mathrsfs,bbding}
\usepackage [latin1]{inputenc}
\usepackage{titlesec}
\usepackage[titletoc]{appendix}
\usepackage{indentfirst}
\usepackage{listings}
\usepackage{float}
\newtheorem{thm}{Theorem}

\newtheorem{lem}{Lemma}
\newtheorem{false statement}{False statement}

\theoremstyle{definition}

\newtheorem{conj}{Conjecture}

\newtheorem{problem}{Problem}

\begin{document}

\title{Connectivity preserving spanning $(u,v)$-paths in $k$-connected graphs\footnote{Supported by Natural Science Foundation of Xinjiang Uygur Autonomous Region (2025D01E02), National Natural Science Foundation of China (12261086), Natural Science Foundation of Xinjiang Uygur Autonomous Region (2024D01C41) and Tianshan Talent Training Program (2024TSYCCX0013).}}
\author{{Zhaolin Teng},  {Yingzhi Tian}\thanks{Corresponding author. E-mail: zlteng03@163.com (Z. Teng), tianyzhxj@163.com (Y. Tian).}\\
{\footnotesize College of Mathematics and System Sciences, Xinjiang
	University, Urumqi, Xinjiang, 830046, PR China}}
\date{}

\maketitle {\flushleft\large\bf Abstract:}
Hasunuma [Graphs Combin. 41:10 (2025)] proved that for $k\ge 2$, there exists a function $f(k)=O(k)$ such that every $k$-connected graph $G$ of order $n\ge f(k)$ with $\delta(G)\ge \frac{n}{2}$ contains a Hamiltonian cycle $H$ such that $G-E(H)$ is $k$-connected. 
In this paper, we show that for $k\ge 2$, if $G$ is a $k$-connected graph of order $n\ge 6k+6$ with minimum degree at least $\frac{n+1}{2}$, then for any two distinct vertices $u,v\in V(G)$, there exists a Hamiltonian $(u,v)$-path $P$ such that $G-E(P)$ is $k$-connected. Moreover, we further extend this result to $s$ internally disjoint spanning $(u,v)$-paths. 

\vspace{0.1cm}
\begin{flushleft}
\textbf{Keywords:} Connectivity; $k$-connected graph; Hamiltonian path; Spanning path
\end{flushleft}

\section{Introduction}

Throughout this paper, all graphs are finite and without multiple edges and without loops. For graph theoretical terminologies and notation not defined here, we follow \cite{Bondy}. Let $G$ be a graph with vertex set $V(G)$ and edge set $E(G)$. The order of $G$ is the cardinality of its vertex set, denoted by $|V(G)|$. 
For $v\in V(G)$, let $N_G(v)$ be the set of vertices adjacent to $v$ in $G$, i.e., $N_G(v)=\{u\in V(G)| uv\in E(G)\}$, and let $d_G(v)=|N_G(v)|$ be the degree of $v$ in $G$. Let $\delta(G)=\min_{v\in V(G)}d_G(v)$ and $\Delta(G)=\max_{v\in V(G)}d_G(v)$. For a nonempty vertex subset $S$ of a graph $G$, we denote by $G[S]$ the subgraph induced by $S$ and by $G-S$ the subgraph induced by $V(G)-S$. For a nonempty edge subset $F\subseteq E(G)$, $G-F$ is defined similarly. The connectivity of $G$, denoted by $\kappa(G)$, is the minimum size of a vertex set $S\subseteq V(G)$ such that $G-S$ is disconnected or has only one vertex. The edge connectivity of $G$, denoted by $\lambda(G)$, is the minimum size of an edge set $F\subseteq E(G)$ such that $G-F$ is disconnected. The graph $G$ is said to be \emph{$k$-connected} (respectively, \emph{$k$-edge-connected}) if $\kappa(G)\geq k$ (respectively, $\lambda(G)\geq k$). By the definition of connectivity, we know that a graph is $k$-connected if it remains connected after deletion of any fewer than $k$ vertices. A graph of order $1$ is called trivial. Note that we consider the trivial graph is both $1$-connected and $1$-edge-connected.

In 1972, Chartrand, Kaigars and Lick proved the following well-known result.

\begin{thm}(\cite{Chartrand})
    Every $k$-connected graph $G$ of minimum degree $\delta(G)\geq \lfloor \frac{3k}{2}\rfloor$ has a vertex $u$ such that $\kappa(G-u)\ge k$.
\end{thm}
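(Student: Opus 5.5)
We argue by contradiction, following the classical fragment/atom method. Suppose that for every vertex $u$ the graph $G-u$ has connectivity at most $k-1$. Then for each $u$ there is a set $S$ with $|S|=k-1$ such that $G-u-S$ is disconnected. Equivalently, $T=S\cup\{u\}$ is a minimum vertex cut of $G$ (of size exactly $k$, since $\kappa(G)\ge k$) containing $u$. (The order of $G$ exceeds $k+1$, because $\delta\ge\lfloor 3k/2\rfloor\ge k+1$ for $k\ge 2$; the case $k=1$ is trivial.) Thus \emph{every} vertex lies in some minimum separator.

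A \emph{fragment} is the vertex set of a union of some, but not all, components of $G-T$ for a $k$-cut $T$. Its \emph{complement} is $\bar F=V(G)-F-T$. We choose, over all minimum cuts $T$ and fragments $F$ of $G-T$, a fragment $F$ of minimum cardinality, an \emph{end}.

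The first step is the size bound. Every vertex $x\in F$ has $N(x)\subseteq F\cup T$, so $|F|\ge \delta-k+1\ge \lfloor k/2\rfloor+1$.

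The second step uses the assumption. Pick $x\in F$ and a $k$-cut $T'$ containing $x$, with fragment $F'$ and complement $\bar F'$. We now apply the standard crossing lemma (Mader/Watkins): if $F\cap F'\neq\emptyset$, then $F\cap F'$ is a fragment whenever $|F\cap T'|\ge |\bar F'\cap T|$. The supporting inequality is
\[
|N(F\cap F')|+|N(\bar F\cap\bar F')|\le |T|+|T'|=2k,
\]
where $N(\cdot)$ denotes the boundary inside $T\cup T'$. Since $x\in F\cap T'$, the set $F\cap T'$ is nonempty. We consider the four corners $F\cap F'$, $F\cap\bar F'$, $\bar F\cap F'$ and $\bar F\cap \bar F'$.

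\begin{itemize}
\item If a corner inside $F$ is nonempty and its neighborhood has size at most $k$, that corner is a fragment strictly smaller than $F$ (it misses $x$), which contradicts minimality.
\item Using the inequality pairwise, for both corners $F\cap F'$ and $F\cap\bar F'$ to escape being fragments, each must have a neighborhood larger than $k$. This forces the opposite corners $\bar F\cap\bar F'$ and $\bar F\cap F'$ to be empty, or to have small boundaries.
\item A counting argument then yields $|T\cap \bar F'|$ and $|T\cap F'|$ both greater than $|F\cap T'|$.
\end{itemize}

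The main obstacle is the case where both $F\cap F'$ and $F\cap \bar F'$ are empty, i.e.\ $F\subseteq T'$. Then $|F|\le k$. Moreover, every $y\in F$ has all its neighbors in $F\cup T$, and they cannot all lie on one side of $T'$ unless they lie in $T'\cup T$. The degree condition now bites. A vertex $y\in F\subseteq T'$ must have neighbors in both $F'$ and $\bar F'$, otherwise $T'-y$ would still separate, contradicting minimality of $T'$. Its neighbors lie in $F\cup T\subseteq T'\cup T$, so together with the side corners $T\cap F'$ and $T\cap\bar F'$ we get
\[
\delta\le |F|-1+|T|-|T\cap T'| .
\]
The empty-corner analysis gives $|T\cap F'|,|T\cap\bar F'|\ge|F|$, roughly, so $|T\cap T'|\le k-2|F|$. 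Combining this with $|F|\ge\delta-k+1$ and $\delta\ge\lfloor 3k/2\rfloor$, we expect $|F|+|T|-|T\cap T'|-1 < \lfloor 3k/2\rfloor$ after optimizing. Indeed $|F|\le k/2$ is forced by $2|F|\le |T\setminus T'|\le k$, which contradicts $|F|\ge\lfloor k/2\rfloor+1$. Getting these inequalities exactly right, including the floor, is where the care lies, and the threshold $\lfloor 3k/2\rfloor$ should come out sharp there.
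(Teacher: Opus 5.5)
The paper does not prove this statement (it is quoted from Chartrand, Kaigars and Lick), so your sketch has to stand on its own. Its architecture is the classical one: an atom $F$, a $k$-cut $T'$ through $x\in F$, a reduction to $F\subseteq T'$, and the final clash $2|F|\le|T\setminus T'|\le k$ against $|F|\ge\lfloor k/2\rfloor+1$. That endpoint is right. However, neither case is actually carried out, and several stated inequalities are wrong.

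\textbf{The case $F\not\subseteq T'$.} Your crossing lemma is reversed. From $N(F\cap F')\subseteq(F\cap T')\cup(T\cap T')\cup(T\cap F')$ one gets $|N(F\cap F')|\le k+|F\cap T'|-|T\cap\bar F'|$. So a nonempty $F\cap F'$ is a fragment smaller than $F$ (it misses $x$) as soon as $|F\cap T'|\le|T\cap\bar F'|$. Minimality therefore yields $|F\cap T'|>|T\cap\bar F'|$, and likewise $|F\cap T'|>|T\cap F'|$ when $F\cap\bar F'\neq\emptyset$. This is the opposite of your third bullet.

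The opposite corner is also not ``empty or with small boundary''. Once $|N(F\cap F')|>k$, your submodular inequality gives $|N(\bar F\cap\bar F')|<k$. A nonempty $\bar F\cap\bar F'$ would be separated from $F$ by that set, so the corner must be empty.

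Above all, no contradiction is reached in this case. The missing idea is that the complements $\bar F$ and $\bar F'$ are themselves fragments, hence have size at least $|F|$. If $F\cap F'$ and $F\cap\bar F'$ are both nonempty, then $\bar F\subseteq T'$, and adding the two strict inequalities gives $|\bar F|=k-|T\cap T'|-|F\cap T'|<|F\cap T'|\le|F|$. If only $F\cap F'\neq\emptyset$ (a case you never mention), then $\bar F\cap\bar F'=\emptyset$ forces $\bar F'\subseteq T$, so $|\bar F'|=|T\cap\bar F'|<|F\cap T'|\le|F|$. Either way the minimality of $F$ is violated.

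\textbf{The case $F\subseteq T'$.} The bound $\delta\le|F|-1+|T|-|T\cap T'|$ is unjustified. Nothing prevents $y\in F$ from having neighbours in $T\cap T'$, so you only get $\delta\le|F|-1+k$. It is also not needed.

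What is needed, and what you only assert ``roughly'', is $|T\cap F'|\ge|F|$ and $|T\cap\bar F'|\ge|F|$. The fact that $y$ has neighbours on both sides of $T'$ gives only nonemptiness. Argue as follows.
\begin{itemize}
\item If $\bar F\cap F'=\emptyset$, then $F'\subseteq T$. Since $F'$ is a fragment, $|T\cap F'|=|F'|\ge|F|$.
\item Otherwise $N(\bar F\cap F')\subseteq(\bar F\cap T')\cup(T\cap T')\cup(T\cap F')$ separates $\bar F\cap F'$ from $F$. Hence $|\bar F\cap T'|+|T\cap T'|+|T\cap F'|\ge k=|T'|=|F|+|T\cap T'|+|\bar F\cap T'|$, where $F\subseteq T'$ is used in the last equality. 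This gives $|T\cap F'|\ge|F|$.
\end{itemize}
The same argument for $\bar F'$ gives $2|F|\le|T\cap F'|+|T\cap\bar F'|\le k$, contradicting $|F|\ge\lfloor k/2\rfloor+1$. With these two repairs your outline becomes a complete proof.
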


In 2008, Fujita and Kawarabayashi in \cite{Fujita} proved that there are two redundant adjacent vertices in a $k$-connected graph with minimum degree at least $\lfloor \frac{3k}{2}\rfloor+2$. In the same paper, they stated the following conjecture.

\begin{conj}\label{conj1}(\cite{Fujita})
    For all positive integers $k,m$, there is a (least) non-negative integer $f_k(m)$ such that every $k$-connected graph $G$ with $\delta(G)\geq \lfloor \frac{3k}{2}\rfloor-1+f_k(m)$ contains a connected subgraph $W$ of exact order $m$ such that $G-V(W)$ is still $k$-connected.
\end{conj}

Mader \cite{Mader} confirmed Conjecture \ref{conj1} and proved that $f_k(m)=m$ holds for all $k,m$. In addition, the connected subgraph $W$ can even be a path. Meanwhile, Mader conjectured that the result would hold for all trees. 

\begin{conj}\label{conj2}(\cite{Mader})
    For any tree $T$ with order $m$, every $k$-connected graph $G$ with $\delta(G)\geq \lfloor \frac{3k}{2} \rfloor +m-1$ contains a tree $T'\cong T$ such that $\kappa(G-V(T'))\geq k$.
\end{conj}

Actually, the main result in \cite{Diwan} implies that conjecture \ref{conj2} holds for $k=1$. For $k=2$, many researchers made contributions to solve Conjecture \ref{conj2}, see \cite{Hasunuma1,Hasunuma3,Hong, Lu, Tian, Tian2}. In 2022, Hong and Liu \cite{Hong1} confirmed Conjecture \ref{conj2} for $k=2$ and $k=3$. As far as we know, Conjecture \ref{conj2} remains open for $k\geq 4$. 

As a similar problem on the edge-version, we can consider the graph obtained from $G$ by deleting all edges of a subtree $T'$  instead of the vertices of $T'$. Hasunuma \cite{Hasunuma} proposed the following edge-version conjecture.

\begin{conj}\label{conj4}(\cite{Hasunuma})
    For any tree $T$ of order $m$, every $k$-connected graph (respectively, $k$-edge-connected graph) $G$ with minimum degree at least $k+m-1$ contains a tree $T'\cong T$ such that $\kappa(G-E(T'))\geq k$ (respectively, $\lambda (G-E(T'))\geq k$).
\end{conj}

Hasunuma confirmed Conjecture~\ref{conj4} for $k\le 2$ in \cite{Hasunuma}. Later, authors in \cite{H.Liu} and \cite{Yang2} independently verified Conjeture~\ref{conj4} for $k=3$. For generally $k\ge 4$, Conjecture~\ref{conj4} remains open. 

A path or cycle which contains every vertex of a graph is called a \emph{Hamilton path} or \emph{Hamilton cycle} of the graph. A graph is \emph{Hamiltonian} if it contains a Hamilton cycle. Dirac's theorem asserts that every graph $G$ of order $n$ with $\delta(G)\ge \frac{n}{2}$ is Hamiltonian, and the graph with this degree condition is often called a Dirac graph. A natural problem asks whether one can find a Hamiltonian structure whose removal preserves the connectivity. Recently, Hasunuma \cite{Hasunuma4} considered connectivity preserving Hamiltonian cycles in Dirac graphs.

\begin{thm}(\cite{Hasunuma4})\label{thm:ham-cycle}
    For any $k\ge 2$, every $k$-connected graph $G$ of order $n\ge 6k+10$ with $\delta(G)\ge \frac{n}{2}$ contains a Hamiltonian cycle $H$ such that $G-E(H)$ is $k$-connected.
\end{thm}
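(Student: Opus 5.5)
Theorem~\ref{thm:ham-cycle} is Hasunuma's result, so the plan below reconstructs a proof of it in the spirit of the $(u,v)$-path result announced in the abstract. The idea is to build the Hamiltonian cycle $H$ inside a sparse part of $G$ while keeping a rigid, highly connected skeleton of $G$ outside $E(H)$.

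\textbf{Step 1: fix a skeleton.} Since $\delta(G)\ge n/2$, any two vertices have $|N(x)\cap N(y)|\ge 2\delta-n+\ldots$, and more importantly the graph is dense. Using $k$-connectivity and Menger's theorem, fix a structure $S$ that certifies $k$-connectivity with only $O(k)$ vertices of high degree. Concretely, choose a set $A$ of about $2k+2$ vertices together with the edges from every vertex to $k$ of its neighbours, arranged so that any $k-1$ deleted vertices leave the rest attached to a surviving connected core. The edges of $H$ must avoid this structure except at a controlled number of places.

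\textbf{Step 2: reduce to a Hamiltonicity statement.} Delete a carefully chosen set $F$ of edges that we want to keep, so that $G-E(H)\supseteq F$ will be $k$-connected. Then argue that $G'=G-F$ still satisfies a Dirac/Ore-type condition, possibly after contracting or absorbing the few vertices of $A$ by short paths fixed in advance. Since each vertex loses at most about $k$ edges to $F$, $\delta(G')\ge n/2-k$. That is not enough for Dirac directly, so instead use a Pósa/Chvátal-type argument or a rotation–extension argument. The sizes give slack: $n\ge 6k+10$ gives $n/2-k\ge 2k+5$.

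\textbf{Step 3: repair connectivity of $G-E(H)$ directly.} An alternative to Step 2 is to take any Hamiltonian cycle $H$ (Dirac) with $G-E(H)$ having as few small cuts as possible. Suppose $T$ with $|T|<k$ separates $G-E(H)$. Every vertex has degree at least $n/2-2$ in $G-E(H)$, so each component of $G-E(H)-T$ has at least $n/2-k-1$ vertices. Hence there are exactly two large sides $X,Y$, and almost all $X$–$Y$ edges of $G$ lie in $H\cup T$. Because $G$ is $k$-connected and dense, $G$ has at least about $k$ disjoint $X$–$Y$ edges. Of these, at most two come from $H$ crossing, plus the cycle must cross $X$ to $Y$ an even number of times.

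\textbf{Step 4: switching.} Perform a 2-opt switch of $H$, replacing $x_1x_1^+,\,y_1y_1^+$ by $x_1y_1^{\ldots}$ or by edges inside $X$ and $Y$. Degree conditions guarantee such chords, since $X$ and $Y$ are near-cliques of size about $n/2$. This frees an $X$–$Y$ edge and strictly decreases a potential function, such as the number of pairs $(T,\{X,Y\})$ of small cuts.

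The main obstacle is showing that the switch does not create a new small cut elsewhere. I would handle this by choosing $H$ to lexicographically minimize the number of $H$-edges between the sides of all near-balanced cuts. Then I would show via the $6k+10$ bound that the sides of any two small cuts essentially coincide, so only one cut structure needs repair.
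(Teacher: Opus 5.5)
\textbf{There is a genuine gap.} You offer several routes, carry none of them through, and each one breaks at an identifiable point.

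In Steps 1--2 you never specify the skeleton or show that it exists, and the degree accounting cannot work. If every vertex may lose up to $k$ edges to the protected set $F$, then $\delta(G-F)\ge n/2-k$ says nothing about Hamiltonicity: $K_{n/2-k,\,n/2+k}$ has minimum degree $n/2-k$ and no Hamiltonian cycle. P\'osa- or Chv\'atal-type conditions help only if the vertices whose degree falls below $n/2$ are $O(k)$ in number, so the protected edges must be confined to $O(k)$ vertices.

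Your Step 3 does correctly obtain the two large sides $X,Y$; this is Claim 1 in the paper's proof of Theorem~\ref{thm:ham-path}. The next assertion, however, is backwards. Since $X$ and $Y$ lie in different components of $G-E(H)-T$, \emph{every} edge of $G$ between $X$ and $Y$ lies in $H$. Menger only guarantees $k-|T|$ disjoint such edges, and the rest of the connectivity must run through $T$. For example, take two equal cliques $X,Y$, a set $T$ of $k-1\ge 2$ vertices joined to everything, and a single $X$--$Y$ edge; this graph is $k$-connected with $\delta\ge n/2$. So freeing $X$--$Y$ edges by 2-opt switches cannot by itself restore $k$-connectivity. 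Step 4 then leaves unproved exactly the crucial point, that switches create no new small cuts.

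\textbf{The missing idea.} The paper only quotes this theorem from \cite{Hasunuma4}, but its proof of Theorem~\ref{thm:ham-path} is the path analogue, and the same argument for cycles gives exactly the threshold $6k+10$. No iteration or potential function is needed.

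First, use the failed cycle $H_0$ once, to extract a partition $V(G)=V(G_1)\cup W\cup V(G_2)$ with $|W|=k'<k$. This partition is a property of $G$ itself. Every $x\in V(G_i)$ reaches the other side only along $H_0$, so it has at least $n/2-2-k'$ neighbours in $V(G_i)$, while $|V(G_i)|\le n/2+1$.

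Next, protect only $k$ disjoint $V(G_1)$--$V(G_2)$ paths $Q_1,\dots,Q_k$ of order at most $4$ with interiors in $W$. These come from Menger's paths by rerouting through vertices of $W$ that have at least $k+1$ neighbours on one side. Let $B$ be their union; it has at most $3k-1$ vertices and $\Delta(B)\le 2$.

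Then take any Hamiltonian cycle $H$ of $G-E(B)$. It exists by P\'osa's condition, because only the at most $3k-1<n/2-2$ vertices of $B$ drop below degree $n/2$, and only by $2$.

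Finally, certify $k$-connectivity of $G-E(H)$ directly rather than cut by cut:
\begin{itemize}
\item Each $G[V(G_i)]-E(H)$ has minimum degree at least $n/2-k-3\ge \frac{(n/2+1)+k-2}{2}$; this inequality is precisely $n\ge 6k+10$. Hence each side is $k$-connected by Lemma~\ref{lem:CH}.
\item The two sides are glued along the intact $Q_i$ by Lemma~\ref{lem:two-block}.
\item The remaining vertices of $W$ each keep at least $k$ neighbours in $V(G_1)\cup V(G_2)$, so they are added back by Lemma~\ref{lem:add vertex}.
\end{itemize}
This works for every Hamiltonian cycle avoiding $E(B)$, so the worry about creating new small cuts never arises.
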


\begin{thm}(\cite{Hasunuma4})\label{thm:ham-cycles}
    For any $k\ge 2$ and any $l\ge 2$, every $k$-connected graph $G$ of order
    $$n\ge \max\{kl+\max\{kl,6l+2\}+3k+2l-6, 6k+20l-10,\frac{224l}{5}-10\}$$ 
    with $\delta(G)\ge \frac{n}{2}$ contains $l$ Hamiltonian cycles $H_1, H_2,\ldots, H_l$ such that $G-\bigcup_{1\le i\le l}E(H_i)$ is $k$-connected.
\end{thm}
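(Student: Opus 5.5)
We expect the main difficulty to be a structured case in which $G$ is close to two disjoint cliques of order about $n/2$ joined by few edges. The plan is to reduce to that case and handle it by explicit construction. The cycles themselves are guaranteed by Nash-Williams' theorem: every graph $G$ with $\delta(G)\ge \frac n2$ contains $\lfloor \frac{5n}{224}\rfloor$ edge-disjoint Hamiltonian cycles. The bound $n\ge \frac{224l}{5}-10$ is there so that this supply is at least $l$. We would first analyse what a small separator of $G-\bigcup_i E(H_i)$ can look like, and then show that either any $l$ edge-disjoint Hamiltonian cycles work, or $G$ has the structure above.

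\emph{Step 1 (shape of a bad cut).} Let $H_1,\dots,H_l$ be edge-disjoint Hamiltonian cycles and put $G'=G-\bigcup_i E(H_i)$. Suppose $S$ separates $G'$ with $|S|\le k-1$ and $G'-S=A\cup B$, with no $A$--$B$ edges in $G'$.
\begin{itemize}
\item Each vertex of $A$ has degree at least $\frac n2-2l$ in $G'$, and all these neighbours lie in $A\cup S$. Hence $|A|\ge \frac n2-2l-k+2$, and symmetrically for $B$, so both sides have size $\frac n2\pm O(k+l)$.
\item Every $A$--$B$ edge of $G$ lies on some $H_i$. Since each vertex meets only $2l$ cycle edges, $e_G(A,B)\le 2l\min\{|A|,|B|\}$.
\end{itemize}
Moreover, each vertex of $A$ misses at most $2l+k$ vertices of $A$ in $G$, so $G[A]$ and $G[B]$ are nearly complete. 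Thus a bad cut can exist only if $G$ is \emph{$(k,l)$-extremal}: $V(G)=A\cup S\cup B$ with these size bounds, almost complete sides and $O(ln)$ crossing edges. The constraints on $n$ of the form $kl+\max\{kl,6l+2\}+3k+2l-6$ and $6k+20l-10$ should be exactly what makes these counting inequalities strict.

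\emph{Step 2 (non-extremal case).} If no partition of this type exists, then any $l$ edge-disjoint Hamiltonian cycles supplied by Nash-Williams' theorem leave $G'$ $k$-connected, by Step 1.

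\emph{Step 3 (extremal case).} Here we build the cycles by hand. Fix an extremal partition $(A,S,B)$.
\begin{itemize}
\item Since $G$ is $k$-connected, Menger's theorem gives $k$ disjoint $A$--$B$ paths through $S$ and the crossing edges. Reserve a family $\mathcal R$ of such paths, together with a few extra crossing edges, that will never be used.
\item Each $H_i$ needs at least two crossings between the sides, possibly via vertices of $S$. Choose pairwise disjoint crossing pairs for $i=1,\dots,l$, avoiding $\mathcal R$.
\item This choice should be possible because $\delta(G)\ge \frac n2$ with $|A|,|B|$ close to $\frac n2$ forces vertices of $A$ near the size threshold to have many neighbours in $B\cup S$, so crossings are plentiful.
\item Within each side, $G[A]$ minus the already used edges still has minimum degree $|A|-O(k+l)$. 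A Dirac-type or Hamilton-connectedness argument, applied $l$ times greedily, then completes each $H_i$ by a Hamiltonian path of $A$ and of $B$ between the prescribed endpoints, with $S$ absorbed along the way.
\end{itemize}

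\emph{Step 4 (verification).} Finally we check that $G'$ is $k$-connected. Each side stays nearly complete after removing $2l$ edges per vertex, and the reserved family $\mathcal R$ still gives $k$ disjoint connections between the sides and through $S$. Hence no separator of size less than $k$ exists.

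The main obstacle is Step 3, specifically routing the crossings so that they do not destroy the reserved $k$ disjoint $A$--$B$ connections. The difficulty arises when $G$ itself has only about $k$ crossing edges and $|S|$ is near $k-1$. In that situation the Hamiltonian cycles and the reserved paths compete for the same scarce crossing edges. The first thing we would try is a careful count: use the degree condition to show that some side has size at most $\frac{n-1}{2}$ minus the $S$-part, so that every vertex there has a neighbour across. This should produce substantially more crossing edges than the $2l+k$ that are needed.
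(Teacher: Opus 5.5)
The paper does not prove this statement; it is quoted from Hasunuma. Its proofs of Theorems~\ref{thm:ham-path} and~\ref{thm:path-system} do follow the template that your Steps 1, 2 and 4 reproduce: a failed first family yields a separator $W$ with exactly two dense sides, one fixes $k$ short crossing paths $B$, and $k$-connectivity is checked at the end with Lemmas~\ref{lem:two-block} and~\ref{lem:add vertex}. Your Step 4 computation is in fact exactly what produces the term $6k+20l-10$.

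The genuine gap is Step 3. In the paper, the second family is obtained by re-applying an existence result to $G-E(B)$. That works only because $\delta$ has slack above the threshold ($\lceil\frac{n+1}{2}\rceil$, $\lceil\frac{n+t+2}{2}\rceil$). Here $\delta(G)\ge\frac n2$ exactly, so $G-E(\mathcal R)$ has vertices of degree $\frac n2-2$ and Nash-Williams cannot be re-applied. Step 3 therefore has to carry the whole theorem, and what you give there is a heuristic; you say yourself that the routing is not done.

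The heuristic you propose also measures the wrong quantity: what matters is the matching structure of the crossing edges, not how many there are. For example, take cliques $X,Y$ of order $\frac n2$, a vertex $x^*\in X$ joined to all of $Y$, a vertex $y^*\in Y$ joined to all of $X$, and a matching $M$ of $k-2$ further $X$--$Y$ edges. This $G$ is $k$-connected, has $\delta(G)\ge\frac n2$, and has about $n$ crossing edges. A $k$-connected $G'$ must contain $k$ disjoint $X$--$Y$ edges, and every such matching contains all of $M$. Hence every $H_i$ avoids $M$, so it consists of $x^*$, a Hamiltonian path of $X-x^*$, $y^*$, and a Hamiltonian path of $Y-y^*$. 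Its only crossing edges are one edge at $x^*$ and one at $y^*$, so all $l$ cycles cross at the same two vertices.

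So ``pairwise disjoint crossing pairs'' cannot mean vertex-disjoint. What you need is an argument that chooses $\mathcal R$ and the $l$ crossing pairs simultaneously, such that:
\begin{itemize}
\item each pair is a $2$-matching;
\item the pairs are edge-disjoint and avoid $\mathcal R$;
\item $\mathcal R$ still links the two sides.
\end{itemize}
One way is a K\"onig-type analysis of small vertex covers of the crossing edges.

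When $|S|\ge 2$ the situation is worse. The degree condition then forces no direct $A$--$B$ edge at all, since a vertex of the smaller side can have all its $\frac n2$ neighbours in $A\cup S$. Your ``every vertex has a neighbour across'' count only appears after moving $S$-vertices into the sides. The crossings must then run through $S$-vertices, which lie on every $H_i$, are also used by $\mathcal R$, and may see only one side.

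None of this is carried out, nor is the edge-disjoint absorption of $S$ into all $l$ cycles. The term $kl+\max\{kl,6l+2\}+3k+2l-6$, which is presumably where this accounting happens, is never derived.

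A smaller point: with $\lfloor 5n/224\rfloor$ as you quote it, $n\ge\frac{224l}{5}-10$ does not give $l$ cycles. For $l=5$ and $n=214$ it gives only $4$. That hypothesis is exactly $\frac{5(n+10)}{224}\ge l$, so you need the corresponding form of Nash-Williams' bound.
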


The \emph{spanning connectivity} of $G$, denoted by $\kappa^*(G)$, is the largest integer $t$ such that, for every $s$ with $1\le s\le t$ and every pair of distinct vertices $u,v\in V(G)$, there exist $s$ internally disjoint $(u,v)$-paths whose union spans $G$. This collection of $t$ internally disjoint $(u,v)$-paths is called a spanning $(u,v)$-path system.
In this terminology, the case $t=1$ is precisely the existence of a Hamiltonian $(u,v)$-path. The case $t=2$ is closely related to Hamiltonian cycles. Hence, Theorem~\ref{thm:ham-cycle} can be considered as the connectivity-preserving result of a spanning $(u,v)$-path system for $t=2$. Motivated by this direction, we establish an endpoint-prescribed Hamiltonian path version corresponding to $t=1$. Furthermore, under suitable minimum degree and order assumptions, we show that for every $s$ with $1\le s\le t$ and every pair of distinct vertices $u,v$, there exist $s$ internally disjoint spanning $(u,v)$-paths whose removal preserves $k$-connectivity. 

The main results of this paper are presented as follows.

\begin{thm}\label{thm:ham-path}
For any $k\ge 2$, let $G$ be a $k$-connected graph of order $n\geq 6k+6$ with $\delta(G)\ge \lceil \frac{n+1}{2} \rceil$. Then for any two distinct vertices $u,v\in V(G)$, there exists a Hamiltonian $(u,v)$-path $P$ such that $G-E(P)$ is $k$-connected.
\end{thm}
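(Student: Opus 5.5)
We plan to build the Hamiltonian $(u,v)$-path $P$ so that $G-E(P)$ keeps a large "dense core" in which $k$-connectivity is easy to certify. We will not route $P$ arbitrarily. Since $\delta(G)\ge \lceil\frac{n+1}{2}\rceil$, any two vertices have at least $2\delta(G)-n\ge 1$ common neighbours, and more to the point, every vertex keeps degree at least $\delta(G)-2\ge \frac{n-3}{2}$ in $G-E(P)$. Because $n\ge 6k+6$, this is at least $3k$. The whole argument is therefore about controlling how the removal of the path edges interacts with small vertex cuts.

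\textbf{Step 1: reduce to a degree/edge-count criterion.} We will use the standard fact that if $H$ is a graph on $n$ vertices in which every two nonadjacent vertices $x,y$ satisfy $|N_H(x)\cap N_H(y)|\ge k$, or more generally $d_H(x)+d_H(y)\ge n+k-2$, then $H$ is $k$-connected. In $H=G-E(P)$ we have $d_H(x)+d_H(y)\ge 2\delta(G)-4\ge n-3$, which falls short by about $k$. So we cannot rely on degrees alone.

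\textbf{Step 2: fix the bad cases directly.} Instead, we argue by contradiction with a minimum separator $S$, $|S|\le k-1$, of $H$, splitting $V(H)-S$ into parts $A$ and $B$. Every vertex of $A$ has more than $\frac{n-3}{2}-(k-1)$ neighbours in $H$ inside $A\cup S$. Hence $|A|,|B|\ge \frac{n-3}{2}-k+2$, so both sides are large, and $|A|,|B|\le \frac{n+1}{2}+k$. On the other hand, $G$ has many $A$–$B$ edges. For $x\in A$ we get $d_G(x)\ge \frac{n+1}{2}$ while $|A\cup S|-1\le n-|B|-1$, so each $x\in A$ sends at least $\frac{n+1}{2}-|A|-|S|+1$ edges to $B$ in $G$. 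All of these edges must lie in $P$, yet each vertex has at most $2$ edges of $P$. Summing over $A$ and $B$ yields $e_G(A,B)\le 2\min(|A|,|B|)$ while $e_G(A,B)$ is forced to be roughly quadratic in the imbalance. The only surviving configuration is $|A|\approx|B|\approx n/2$ with $G$ close to two cliques joined by a sparse bipartite layer plus $S$: an "almost $2K_{n/2}$" extremal structure.

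\textbf{Step 3: choose $P$ to avoid the extremal obstruction.} We will prove a structural dichotomy. Either $G$ is far from such a two-clique structure, in which case any Hamiltonian $(u,v)$-path works (its existence follows since $\delta\ge\frac{n+1}{2}$ makes $G$ Hamiltonian-connected by Ore/Pósa). Or $G$ has a near-partition $V=A_0\cup B_0$ with few crossing edges. In the second case we construct $P$ explicitly. We traverse $A_0$ entirely, cross exactly once (or the minimum parity-forced number of times), and traverse $B_0$, choosing the crossing edges so that at least $k$ vertex-disjoint $A_0$–$B_0$ edges of $G$ remain outside $P$. Such edges exist because the minimum degree $\frac{n+1}{2}$ forces, by König's theorem, a matching of size at least $k+2$ across any near-balanced split when $n\ge 6k+6$. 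Inside each part, Hamiltonian paths with prescribed ends exist, and we route them through Dirac-type arguments in $G[A_0]$ and $G[B_0]$ with degree about $\frac{n}{2}-k$ on roughly $\frac{n}{2}$ vertices. Deleting such a path from a near-clique leaves it highly connected, and the $k$ reserved crossing edges glue the two sides into a $k$-connected graph.

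We expect the main obstacle to be Step 3. The difficulty is making the dichotomy quantitative with the exact bound $n\ge 6k+6$, and handling the positions of $u,v$ (same side versus opposite sides, or in $S$). Parity issues arise when $u,v$ lie on the same side, since then $P$ must cross an even number of times. This is where the "$+1$" in $\frac{n+1}{2}$ should be used, to guarantee an extra crossing edge. We would first attempt the same-side case by crossing twice via two crossing edges that are disjoint from a reserved matching of size $k$.
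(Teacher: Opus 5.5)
There is a genuine gap. Everything hinges on Step 3, which is only sketched, and the one concrete tool you supply for it is false. You assert that $\delta(G)\ge\frac{n+1}{2}$ forces, via K\"onig's theorem, a crossing matching of size at least $k+2$ across a near-balanced split. Here is a counterexample:
\begin{itemize}
\item take $k\ge 4$ and two disjoint cliques $A,B$ of order $a\ge\frac{5k+7}{2}$;
\item add a clique $S$ of order $k-1$ joined to every vertex of $A\cup B$;
\item add one extra edge $a_0b_0$ with $a_0\in A$, $b_0\in B$.
\end{itemize}
Then $n=2a+k-1\ge 6k+6$ and $\delta(G)=a+k-2\ge a+\lceil k/2\rceil=\lceil\frac{n+1}{2}\rceil$. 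Also $G$ is $k$-connected: a set of at most $k-1$ vertices either misses a vertex of $S$, which dominates everything else, or equals $S$, and $G-S$ is connected through $a_0b_0$. This is exactly your extremal configuration. Yet for every split $A_0=A\cup S_1$, $B_0=B\cup S_2$, all crossing edges meet the $k$-set $S\cup\{a_0\}$, so no crossing matching has more than $k$ edges. What is true is weaker: $k$-connectivity plus K\"onig gives a crossing matching of size $k$, because a smaller vertex cover would be a separator.

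Beyond this, several parts of Step 3 are missing. You never make the dichotomy quantitative, and you do not say where the vertices of $S$ go. The routing with prescribed $u,v$ and the parity issue are left as things you ``would attempt''. So the hard case has no proof.

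The idea you are missing is that no stability analysis or hand routing is needed. The paper argues as follows.
\begin{itemize}
\item It takes an arbitrary Hamiltonian $(u,v)$-path $P_0$, which exists by the Chartrand--Kapoor--Kronk criterion, and uses it only as a probe. If $G-E(P_0)$ has a separator $W$ with $|W|<k$, the degree bound forces exactly two large components $G_1,G_2$.
\item Menger's theorem gives $k$ disjoint $G_1$--$G_2$ paths $Q_1,\ldots,Q_k$. These can be shortened to length at most $3$ with interiors in $W$.
\item Their union $B$ has $\Delta(B)\le 2$ and at most $3k-1$ vertices. So $G-E(B)$ is still Hamiltonian-connected by the same criterion; this is where $n\ge 6k+6$ enters.
\item Any Hamiltonian $(u,v)$-path $P$ of $G-E(B)$ now works. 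Each $G[V(G_i)]-E(P)$ is $k$-connected by the Chartrand--Harary bound, since every vertex lost at most $2$ edges to $P_0$ and loses at most $2$ to $P$. The paths $Q_i$ survive in $G-E(P)$.
\item Hasunuma's two-block lemma, followed by adding the remaining vertices of $W$ one at a time, gives $k$-connectivity.
\end{itemize}
Reserving the connectivity certificate first and then invoking Hamiltonian-connectedness of its complement is what removes your difficulties with the positions of $u,v$ and with parity.
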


\begin{thm}\label{thm:path-system}
For any $k\ge 2$ and $t\ge 3$, let $G$ be a $k$-connected graph of order $n$. Suppose that one of the following holds:
\begin{enumerate}
    \item[(i)] $n\ge 6k+1$ and $\delta(G)\ge \lceil \frac{n+6}{2}\rceil$ if $t=3$;
    \item[(ii)] $n\ge 6k+7t-17$ and $\delta(G)\ge \lceil \frac{n+t+2}{2}\rceil$ if $t\ge 4$.
\end{enumerate}
Then for any two distinct vertices $u,v\in V(G)$ and for any positive integer $s$ with $1\le s\le t$, there exist $s$ internally vertex-disjoint $(u,v)$-paths $P_1,P_2,\ldots, P_s$ such that $V(P_1\cup P_2\cup\cdots \cup P_s)=V(G)$ and $\kappa(G-E(P_1\cup P_2\cup \cdots \cup P_s))\ge k$.
\end{thm}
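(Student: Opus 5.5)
We prove Theorem~\ref{thm:path-system} by splitting the prescribed degree into two parts. One part is a sparse spanning $k$-connected subgraph that we reserve. The other part is a dense remainder in which a Dirac-type argument builds the path system.

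\textbf{Step 1 (reserve a sparse $k$-connected subgraph).} First we choose a spanning subgraph $K\subseteq G$ with $\kappa(K)\ge k$ such that every vertex has only $O(k)$ incident edges in $K$. We build $K$ from $k$ internally disjoint fans/paths routed through high-degree common neighbourhoods, or as a small $k$-connected core $C$ of about $2k+1$ vertices plus $k$ edges from each other vertex into $C$. Since $\delta(G)\ge (n+t+2)/2$, any two vertices have at least $t+2$ common neighbours, and any two sets of size about $n/2$ interact heavily, so such a $K$ exists. The paths will avoid $E(K)$. Then $G-E(P_1\cup\cdots\cup P_s)\supseteq K$ is $k$-connected, which gives the conclusion.

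\textbf{Step 2 (build the paths in $G'=G-E(K)$).} The constraint is that $K$ is sparse only on average. Each vertex of $C$ may lose many edges, so we cannot delete all of $E(K)$ and keep a Dirac degree. Instead we delete only the $O(k)$ edges at each vertex outside $C$, and we treat the vertices of $C$ by hand. Each vertex of $C$ is inserted into the path system using two edges to vertices outside $C$ that do not belong to $K$. This is possible because $\deg(c)\ge (n+t+2)/2$ while $K$ uses only $O(k)$ of those edges. The condition $n\ge 6k+7t-17$ makes $O(k)+t$ small compared with $n/2$.

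Outside $C$, the degree in $G'$ is at least $(n+t+2)/2-O(k)$. This alone is below the threshold for Hamilton-connectedness, so we use absorption/rotation instead. We greedily take $s$ internally disjoint short $(u,v)$-paths through common neighbours, which exist since $|N(u)\cap N(v)|\ge t+2$. We then extend one path into a spanning one by Pósa rotation–extension, avoiding $E(K)$. The excess $t+2$ over $n/2$ covers the $2(s-1)$ vertices fixed on the other paths. A vertex that cannot be inserted would have at most about $n/2$ non-$K$ neighbours, and this gives the contradiction.

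For $t=3$, $\delta\ge (n+6)/2$ gives the corresponding bound, and the case $s\le 2$ follows from Theorem~\ref{thm:ham-path} and Theorem~\ref{thm:ham-cycle}-type arguments.

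\textbf{Main obstacle.} The hard step is the interaction between Steps 1 and 2. We must show that the few vertices where $K$ is dense (the core) and the endpoints $u,v$ can still be inserted into the paths without using edges of $K$. The precise bound $n\ge 6k+7t-17$ suggests a careful count. If a rotation fails, an extremal configuration arises: $G$ is close to $K_{n/2,n/2}$ or to two cliques sharing few vertices. Such configurations are excluded by $\delta>n/2$ combined with $k$-connectivity, and handling them is where the casework is spent.
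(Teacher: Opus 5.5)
\textbf{There is a genuine gap, and it lies in the strategy itself.} Any spanning $k$-connected $K\subseteq G$ has $\delta(K)\ge k$. So a vertex of minimum degree in $G$ has degree at most $\delta(G)-k$ in $G-E(K)$. The hypotheses allow $\delta(G)=\lceil\frac{n+t+2}{2}\rceil$ with $k$ much larger than $t$ (say $t=4$, $k=100$). The graph in which you must build the path system then has minimum degree about $\frac n2-(k-\frac t2-1)<\frac n2$.

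Below $\frac n2$, neither Lemma~\ref{lem7} nor Lemma~\ref{lem:hconn} applies, and rotation--extension driven only by degrees cannot work because the Dirac bound is sharp. Your ``contradiction'' (a stuck vertex has at most about $\frac n2$ non-$K$ neighbours) is not one, since non-$K$ degrees really are below $\frac n2$.

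Nor are the extremal configurations excluded. Take two cliques of order $\frac n2$ joined by a bipartite graph $H$ in which every vertex has about $\frac t2+2$ neighbours across. This graph satisfies every hypothesis. Nothing in your Step~1 prevents $E(H)\subseteq E(K)$, and then $G-E(K)$ is disconnected. This near-two-clique structure is the heart of the problem, not a degenerate case.

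Step~1 is also unproved and internally inconsistent. If each vertex outside a core $C$ of order about $2k+1$ sends $k$ edges of $K$ into $C$, then core vertices carry on average about $k(n-|C|)/|C|\approx\frac n2$ edges of $K$, not $O(k)$. They may therefore have no spare edges with which to be ``inserted by hand''. A minor point as well: for $t=3$ and $n$ close to $6k+1$, Theorems~\ref{thm:ham-path} and~\ref{thm:ham-cycle} do not cover $s\le 2$, since they need $n\ge 6k+6$ and $n\ge 6k+10$ respectively.

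\textbf{The missing idea} is that you need not reserve a whole $k$-connected spanning subgraph, only the link across the one possible weak cut. The paper proceeds as follows.
\begin{enumerate}
\item Take any spanning $(u,v)$-path system $\mathcal P_0$ from Lemma~\ref{lem7}.
\item If $G-E(\mathcal P_0)$ has a cut $W$ with $|W|<k$, degree counting shows that $G-E(\mathcal P_0)-W$ has exactly two components $G_1,G_2$, each of order about $\frac n2$.
\item Each side is dense enough relative to its order that, by Lemma~\ref{lem:CH}, it stays $k$-connected after deleting any subgraph of maximum degree $t$. In particular it survives deletion of any future path system.
\item So it suffices to protect $k$ vertex-disjoint $G_1$--$G_2$ paths of length at most $3$ with interiors in $W$. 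Their union $B$ has $\Delta(B)\le 2$, so $\delta(G-E(B))\ge\delta(G)-2\ge\frac n2+1$, and Lemma~\ref{lem7} applies again in $G-E(B)$.
\item Hasunuma's two-block lemma (Lemma~\ref{lem:two-block}), followed by adding the remaining vertices of $W$ one at a time (each has at least $k$ neighbours in $G_1\cup G_2$), gives $k$-connectivity of $G-E(\mathcal P)$.
\end{enumerate}
Reserving a structure of maximum degree $2$ instead of one of minimum degree at least $k$ is exactly what keeps the whole construction above the Dirac threshold.
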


The paper is organized as follows. Section~\ref{sec2} presents structural properties of graphs concerning connectivity and spanning connectivity. The proofs of our two main results are given in Sections~\ref{sec3} and~\ref{sec4}, respectively. Section~\ref{sec5} concludes the paper.

\vspace*{2mm}

\section{Preliminaries}\label{sec2}

In this section, we collect several tools that will be used repeatedly in the proofs of the main theorems.
We first recall the definition of spanning connectivity. For two distinct vertices $u,v$ of a graph $G$, a collection of internally disjoint $(u,v)$-paths is called a spanning $(u,v)$-path system if the union of their vertex sets is $V(G)$. The \emph{spanning connectivity} $\kappa^*(G)$ is the largest integer $t$ such that, for every integer $s$ with $1\le s\le t$ and every pair of distinct vertices $u,v\in V(G)$, there exist $s$ internally disjoint spanning $(u,v)$-paths in $G$. The following lower bound due to Lin, Huang and Hsu will be used to find the initial spanning path system.

\begin{lem}(\cite{Lin})\label{lem7}
    If $G$ is a graph with $\frac{n}{2}+1\leq \delta(G)\leq n-2$, then $\kappa^*(G)\geq 2\delta(G)-n+2$.
\end{lem}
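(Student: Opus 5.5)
The plan is to set $t=2\delta(G)-n+2$ and show directly that for every pair of distinct vertices $u,v$ and every $s$ with $1\le s\le t$ there are $s$ internally disjoint $(u,v)$-paths covering $V(G)$. The hypothesis $\delta(G)\le n-2$ only ensures that $G$ is not complete, so that the bound is meaningful. The argument has two phases: first build some system of $s$ internally disjoint $(u,v)$-paths, then enlarge it until it spans $G$, using a maximality/counting argument in the style of Dirac's and Ore's theorems.

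\emph{Step 1 (initial system).} If $uv\notin E(G)$, then $|N(u)\cap N(v)|\ge 2\delta(G)-(n-2)=t$. So we get $t$ paths $u w v$ with $w$ a common neighbour, and any $s\le t$ of them form a system. If $uv\in E(G)$, then $|N(u)\cap N(v)|\ge 2\delta(G)-n$, which gives $t-2$ paths of length two. Together with the edge $uv$ this gives $t-1$ paths. I would obtain one more path by a short $u$--$x$--$y$--$v$ detour through the remaining vertices, using $\delta(G)\ge \frac n2+1$ to guarantee the edge $xy$. (Alternatively, the statement only needs the paths to be internally disjoint, so one path may be the edge $uv$ itself.)

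\emph{Step 2 (absorption).} Among all systems of $s$ internally disjoint $(u,v)$-paths, choose $\mathcal P=\{P_1,\dots,P_s\}$ covering the maximum number of vertices. Subject to that, choose it so that the uncovered set $R=V(G)-V(\mathcal P)$ contains a longest possible path $Q=x_1\cdots x_r$ in $G[R]$. Suppose $R\neq\emptyset$. By maximality of $Q$, all neighbours of $x_1$ and of $x_r$ lie in $V(Q)\cup V(\mathcal P)$. By maximality of $\mathcal P$, there is no edge $yz$ on some $P_i$ with $x_1y,\,x_rz\in E(G)$ or with $x_ry,\,x_1z\in E(G)$, since otherwise $Q$ could be inserted into $P_i$. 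The usual pairing argument then applies. Let $m$ be the number of internal vertices of $\mathcal P$, so that $\mathcal P$ has $m+s$ edges. Along each path, the sets $N(x_1)$ and $N(x_r)^{+}$ (successors of neighbours of $x_r$) are disjoint. Hence
$$d_{\mathcal P}(x_1)+d_{\mathcal P}(x_r)\le m+2+s-(\text{small correction}),$$
while $d_Q(x_1)+d_Q(x_r)\le 2(r-1)$ and $m+2+r\le n$. Combining these with $d(x_1)+d(x_r)\ge 2\delta(G)=n+t-2$ should give a contradiction as long as $s\le t$. If $r=1$, the argument reduces to counting the neighbours of a single vertex $x$ against the $m+s$ path edges.

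\emph{Main obstacle.} The delicate part is Step 2. In the counting, $u$ and $v$ are shared by all $s$ paths, so their successor/predecessor relations are not injective. This produces an additive error of order $s$, and that error is exactly what must be absorbed by the surplus $2\delta(G)-n$. When $Q$ cannot be inserted whole, I would first try a Pósa-type rotation of $Q$: replace $x_1$ by another endpoint $x_{i+1}$ whenever $x_1x_i\in E(G)$. This enlarges the set of possible endpoints, and I would apply the disjointness count to that whole endpoint set. The most likely failure point is the tightness when $s=t$ exactly, which is where the precise choice of the initial system in Step 1 matters.
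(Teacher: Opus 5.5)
Your proposal has a genuine gap. The paper does not prove this lemma; it quotes it from Lin, Huang and Hsu, so your argument has to stand on its own, and Step~2, which carries all the content, does not close.

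Combine your own three bounds: $d(x_1)+d(x_r)\le (m+2+s)+2(r-1)\le (n-2-r)+s+2r=n+r+s-2$. Compared with $2\delta(G)=n+t-2$, this gives a contradiction only if $r+s<t$. Even with the sharp pairing bound $d_{\mathcal P}(x_1)+d_{\mathcal P}(x_r)\le m+s$ (valid for $s\ge 2$), you need $r\le t-s+1$. So for $s=t$ the argument breaks as soon as $G[R]$ contains a single edge. For every $s$ it breaks once $R$ contains a path on $t-s+2$ vertices.

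The defect is structural. The term $2(r-1)$ charges each vertex of $Q$ twice, whereas $|V(\mathcal P)|+|R|=n$ pays for it only once. In Dirac's argument this surplus is absorbed by closing the longest path into a cycle, which works because its endpoints have all of their many neighbours on it. Here $Q$ is only longest in $G[R]$, whose minimum degree can be tiny. So neither cycle-closing nor the P\'osa rotations you mention have anything to work with, and you have not supplied a replacement. Only the case $r=1$ goes through: then $R$ is independent, and a vertex of $R$ has at most roughly $(m+s)/2\le (n+s-3)/2<\delta(G)$ neighbours.

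No absorption is needed. The common-neighbour count in your Step~1 already does the work if you stop early and finish with Dirac's theorem.

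For $s=1$, $\delta(G)\ge \frac{n}{2}+1$ gives a Hamiltonian $(u,v)$-path. This follows from Ore's theorem, or from Lemma~\ref{lem:hconn}, since $\psi_j(G)=0$ for $j\le \frac n2$.

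For $s\ge 2$, whether or not $uv\in E(G)$, you have $|N(u)\cap N(v)|\ge 2\delta(G)-n=t-2\ge s-2$. Choose common neighbours $x_1,\dots,x_{s-2}$ and take the paths $ux_iv$. Then $G'=G-\{x_1,\dots,x_{s-2}\}$ has $n'=n-s+2$ vertices and $\delta(G')\ge \delta(G)-s+2\ge \frac{n-s+2}{2}=\frac{n'}{2}$. The middle inequality is exactly $s\le 2\delta(G)-n+2$. Also $n'\ge n-t+2=2(n-\delta(G))\ge 4$, which is the real role of the hypothesis $\delta(G)\le n-2$. By Dirac's theorem $G'$ has a Hamiltonian cycle. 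It contains $u$ and $v$ and splits into two internally disjoint $(u,v)$-paths covering $V(G')$; one of them may be the edge $uv$, which the definition permits. Together with the $s-2$ paths $ux_iv$, this is the required spanning $(u,v)$-path system.
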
 

A graph is \emph{Hamiltonian-connected} if every two distinct vertices are joined by a Hamiltonian path. For a graph $G$ and nonnegative integer $j$, let $\psi_j (G)=|\{v\in V(G)| d_G(v)\le j\}|$. In 1969, Chartrand, Kapoor and Kronk gave the following sufficient condition for a graph to be Hamiltonian-connected. 

\begin{lem}(\cite{Chartrand1})\label{lem:hconn}
Let $G$ be a graph of order $n\ge 4$ such that for every integer $j$ with $2\le j\le \frac{n}{2}$. If the number of vertices of degree not exceeding $j$ is less than $j-1$, that is, $\psi_j (G)<j-1$, then $G$ is Hamiltonian-connected.
\end{lem}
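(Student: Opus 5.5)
The plan is to use the Bondy--Chv\'atal closure technique with threshold $n+1$, which is the right threshold for Hamiltonian-connectedness. Let $C$ denote the $(n+1)$-closure of $G$. It is obtained by repeatedly joining a nonadjacent pair $x,y$ with $d(x)+d(y)\ge n+1$, where degrees are taken in the current graph. I will use the standard closure lemma: if $d(x)+d(y)\ge n+1$ and $G+xy$ is Hamiltonian-connected, then so is $G$. It is proved by taking a Hamiltonian $(a,b)$-path of $G+xy$ that uses $xy$ and rerouting it with a Pósa-type crossing-edge argument. Iterating this, $G$ is Hamiltonian-connected whenever $C$ is. Since $K_n$ with $n\ge 4$ is Hamiltonian-connected, it then suffices to show that $C$ is complete.

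We argue by contradiction and suppose $C$ is not complete. Degrees only increase in passing from $G$ to $C$, so $\psi_j(C)\le \psi_j(G)<j-1$ for every $2\le j\le n/2$. First I rule out small degrees. Applying the hypothesis with $j=2$, which is allowed since $n\ge4$, gives $\psi_2(G)<1$, so $\delta(G)\ge 3$ and hence $\delta(C)\ge 3$. Among all nonadjacent pairs in $C$, choose $x,y$ with $d_C(x)+d_C(y)$ maximum and $d_C(x)\le d_C(y)$. Since $C$ is closed, $d_C(x)+d_C(y)\le n$. Set $j=d_C(x)$; then $3\le j\le n/2$, so $j$ lies in the admissible range of the hypothesis.

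Now count low-degree vertices among the non-neighbours of $y$. The vertex $y$ has exactly $n-1-d_C(y)\ge n-1-(n-j)=j-1$ non-neighbours in $C$, and $x$ is one of them. For any non-neighbour $z$ of $y$, the maximality of the pair gives $d_C(z)+d_C(y)\le d_C(x)+d_C(y)$, so $d_C(z)\le j$. Hence $\psi_j(C)\ge j-1$, contradicting $\psi_j(C)<j-1$. Therefore $C=K_n$, and $G$ is Hamiltonian-connected.

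The main obstacle is the closure lemma for Hamiltonian-connectedness at threshold $n+1$. Take a Hamiltonian $(a,b)$-path $P=a=w_1w_2\cdots w_n=b$ in $G+xy$ that uses $xy$. Deleting $xy$ splits $P$ into two subpaths, one containing $a$ and one containing $b$. One then needs an index-shift pairing of $N(x)$ and $N(y)$ along these segments, so that the degree sum $d(x)+d(y)\ge n+1$ forces a crossing pair of edges that reconnects them into a single $(a,b)$-path. The extra $+1$ compared with Ore's $n$ is needed because the ends $a,b$ are fixed, so one fewer position is available for the pigeonhole count. I would handle this by enumerating the $n-1$ possible positions along the path and checking, in the separate subcases according to the order in which $x$ and $y$ occur relative to $a$ and $b$, that two of these positions must coincide. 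This is routine but it is where care is needed; everything after it is the short counting argument above.
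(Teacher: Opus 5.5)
Your proof is correct. There is nothing in the paper to match it against: the lemma is quoted from \cite{Chartrand1} without proof and used as a black box in the proof of Theorem~\ref{thm:ham-path}. You take the standard closure route. Hamiltonian-connectedness is $(n+1)$-stable (Bondy--Chv\'atal), and the P\'osa-type hypothesis forces the $(n+1)$-closure to be complete. Your counting step holds up:
\begin{itemize}
\item $j=2$ gives $\delta(G)\ge 3$;
\item the extremal nonadjacent pair gives an admissible index $j=d_C(x)$ with $3\le j\le n/2$;
\item the at least $j-1$ non-neighbours of $y$ all have degree at most $j$, which contradicts $\psi_j(C)\le\psi_j(G)<j-1$.
\end{itemize}

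The stability lemma needs no subcases according to where $x,y$ sit relative to $a,b$; one count suffices. Write the Hamiltonian $(a,b)$-path of $G+xy$ as $w_1w_2\cdots w_n$ with $xy=w_iw_{i+1}$ and, after renaming, $x=w_i$, $y=w_{i+1}$. Intersect the sets $\{k: xw_k\in E(G)\}$ and $\{k: yw_{k+1}\in E(G)\}$ with $\{1,\ldots,n-1\}\setminus\{i\}$. They keep sizes at least $d(x)-1$ and $d(y)-1$, since each loses at most the index coming from $w_n$, resp.\ $w_1$. So their sizes sum to at least $n-1$ inside a set of size $n-2$, and they share an index $k$. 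That index gives a Hamiltonian $(a,b)$-path avoiding $xy$:
\begin{itemize}
\item if $k<i$: $w_1\cdots w_k w_i w_{i-1}\cdots w_{k+1}w_{i+1}\cdots w_n$;
\item if $k>i$: $w_1\cdots w_i w_k w_{k-1}\cdots w_{i+1}w_{k+1}\cdots w_n$.
\end{itemize}
This single count is exactly where the threshold $n+1$, rather than Ore's $n$, is needed.

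Compared with the bare citation, your approach is self-contained. It also places the lemma inside the closure framework: adding the analogous count over $x$ and its non-neighbours yields the more general Chv\'atal-type degree-sequence criterion for Hamiltonian-connectedness. The paper's citation buys only brevity.
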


In 1968, Chartrand and Harary showed the following sufficient condition for a graph to be $k$-connected.

\begin{lem}(\cite{Chartrand2})\label{lem:CH}
For any $k\ge 1$, every graph of order $n\ge k+1$ with $\delta(G)\ge \frac{n+k-2}{2}$ is $k$-connected.
\end{lem}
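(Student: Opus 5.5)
This is Lemma~\ref{lem:CH}, the classical Chartrand--Harary bound, so we only need to check that the degree condition rules out every small separating set. The argument is a counting argument on the smallest component left after removing a cut.

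Suppose, for contradiction, that $G$ has order $n\ge k+1$, satisfies $\delta(G)\ge \frac{n+k-2}{2}$, and is not $k$-connected. Since $n\ge k+1$, removing fewer than $k$ vertices never leaves a single vertex. So non-$k$-connectivity gives a vertex set $S$ with $|S|\le k-1$ such that $G-S$ is disconnected.

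Let $C$ be a smallest component of $G-S$. Then
\[
|C|\le \frac{n-|S|}{2}.
\]
Pick any vertex $x\in C$. All neighbours of $x$ lie in $(C-x)\cup S$, so
\[
d_G(x)\le |C|-1+|S|\le \frac{n-|S|}{2}-1+|S| = \frac{n+|S|-2}{2}\le \frac{n+k-3}{2}.
\]
This is strictly less than $\frac{n+k-2}{2}\le \delta(G)$, a contradiction. Hence $G$ is $k$-connected.

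The only delicate point is the single-vertex clause in the definition of connectivity. This is handled by the hypothesis $n\ge k+1$: deleting at most $k-1$ vertices leaves at least two vertices, so any witness to non-$k$-connectivity really is a disconnecting set. The counting step itself is routine, so no real obstacle is expected.
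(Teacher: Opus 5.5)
Your proof is correct: $n\ge k+1$ correctly disposes of the single-vertex clause, and the smallest-component count $d_G(x)\le |C|-1+|S|\le \frac{n+|S|-2}{2}\le \frac{n+k-3}{2}<\delta(G)$ gives the contradiction. The paper does not prove this lemma itself (it is quoted from Chartrand and Harary), but your argument is the standard one and uses the same counting as Claim~1 of both main proofs, where $d_{G'}(x)\le |V(G_i)|-1+|W|$ bounds the component sizes.
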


A vertex of degree $2$ in a path $P$ is called an internal vertex in $P$ and the set of internal vertices in $P$ is denoted by $V_I(P)$. The following lemma from Hasunuma gives a construction of a $k$-connected graph from two vertex-disjoint $k$-connected graphs. 

\begin{lem}(\cite{Hasunuma4})\label{lem:two-block}
Let $G_1$ and $G_2$ be two $k$-connected graphs such that $V(G_1)\cap V(G_2)=\emptyset$. Let $G$ be a graph obtained from $G_1$ and $G_2$ by adding $k$ vertex-disjoint paths $Q_1, Q_2, \ldots,Q_k$ connecting vertices in $V(G_1)$ and $V(G_2)$, where $(V(G_1)\cup V(G_2))\cap (\bigcup_{1\le i \le k}V_I(Q_i))=\emptyset$. Let $G'$ be a graph obtained from $G$ by adding edges so that for each $1\le i \le k$, every internal vertex in $Q_i$ has $k$ neighbors in $V(G_1)\cup V(G_2)$. Then $G'$ is $k$-connected.
\end{lem}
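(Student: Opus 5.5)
The plan is to verify the definition of $k$-connectivity directly: we fix an arbitrary vertex set $S\subseteq V(G')$ with $|S|\le k-1$ and show that $G'-S$ is connected. The order condition is immediate. Each $G_i$ is $k$-connected, so $|V(G_i)|\ge k+1$, and therefore $|V(G')|\ge 2k+2>k$. Also, $G'$ is a spanning supergraph of $G$, and its vertex set is $V(G_1)\cup V(G_2)\cup\bigcup_{i}V_I(Q_i)$. So every vertex of $G'-S$ lies in one of three places: $G_1-S$, $G_2-S$, or the interior of some path $Q_i$.

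\textbf{Step 1: the two blocks survive.} Since $|S|\le k-1$ and $G_j$ is $k$-connected, the graph $G_j-(S\cap V(G_j))$ is connected for $j=1,2$. It is also nonempty, because $|V(G_j)|\ge k+1>|S|$. Denote these connected subgraphs by $G_1^S$ and $G_2^S$; both are subgraphs of $G'-S$.

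\textbf{Step 2: the two blocks are joined.} The paths $Q_1,\dots,Q_k$ are vertex-disjoint. The set $S$ has at most $k-1$ vertices, so it meets at most $k-1$ of these paths. Hence some path $Q_i$ is entirely disjoint from $S$. This $Q_i$ runs from a vertex of $G_1^S$ to a vertex of $G_2^S$ inside $G'-S$. (This includes the degenerate case where $Q_i$ is a single edge.) So $G_1^S\cup G_2^S\cup Q_i$ is connected.

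\textbf{Step 3: the remaining internal vertices attach.} Let $w\notin S$ be an internal vertex of some path $Q_j$. By construction of $G'$, the vertex $w$ has at least $k$ neighbours in $V(G_1)\cup V(G_2)$. At most $k-1$ of them lie in $S$, so $w$ has a neighbour in $V(G_1^S)\cup V(G_2^S)$. Therefore every vertex of $G'-S$ is adjacent to, or lies in, the connected subgraph from Step 2, and $G'-S$ is connected. Since $S$ was an arbitrary set of at most $k-1$ vertices, $G'$ is $k$-connected.

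No step is genuinely difficult. The only point needing care is Step 2: this is the single place where the hypothesis that there are exactly $k$ vertex-disjoint paths is used, via a pigeonhole count. The hypothesis that the internal vertices are disjoint from $V(G_1)\cup V(G_2)$ is what makes the three-way classification of the vertices of $G'-S$ valid.
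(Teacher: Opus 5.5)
Your proof is correct and complete. There is no in-paper proof to compare it with, because the paper does not prove this lemma; it imports it from Hasunuma's paper.

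Your argument is the natural direct one. You delete an arbitrary $S$ with $|S|\le k-1$, note that $G_1-S$ and $G_2-S$ stay connected and nonempty, use pigeonhole to find a path $Q_i$ disjoint from $S$, and attach every surviving internal vertex through one of its $k$ neighbours in $V(G_1)\cup V(G_2)$. This is the same style the paper uses for its own proof of Lemma~\ref{lem:add vertex}.

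Lemma~\ref{lem:add vertex} cannot simply be iterated here. $G_1\cup G_2$ is disconnected, so there is no $k$-connected core to which the internal vertices could be added one at a time. Your Step 2 supplies the missing bridge separately for each fixed $S$.

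One harmless nit: the paper declares the trivial graph $1$-connected. So for $k=1$ the bound $|V(G_j)|\ge k+1$ can fail. In that case $S=\emptyset$, each $G_j-S$ is nonempty, and $|V(G')|\ge 2=k+1$, so nothing in your argument breaks.
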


The final lemma explains that once a $k$-connected core has been obtained, every remaining vertex can be added back provided it has at least $k$ neighbors in the core.

\begin{lem}\label{lem:add vertex}
    Let $G$ be a $k$-connected graph, and let $x\notin V(G)$. Let $G'$ be a graph obtained from $G$ by adding the vertex $x$ and joining $x$ to at least $k$ vertices of $G$. Then $G'$ is $k$-connected.
\end{lem}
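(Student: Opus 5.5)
We must show that $G'$ is $k$-connected, i.e. that $G'$ has at least $k+1$ vertices and that $G'-S$ is connected for every $S\subseteq V(G')$ with $|S|\le k-1$. The plan is to check the order condition first and then split according to whether the new vertex $x$ lies in $S$. The only property of $x$ we need is that its at least $k$ neighbours all lie in $V(G)$.

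For the order condition, recall that $x$ has at least $k$ neighbours in $G$, so $|V(G)|\ge k$ and hence $|V(G')|\ge k+1$. (If $G$ happens to be the trivial graph, then $k=1$. In that case $G'$ is a single edge, which is $1$-connected, so this case is fine.)

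Now fix $S\subseteq V(G')$ with $|S|\le k-1$.

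\emph{Case $x\in S$.} Here $G'-S=G-(S\setminus\{x\})$, and we delete at most $k-2$ vertices of $G$. Since $G$ is $k$-connected, this graph is connected.

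\emph{Case $x\notin S$.} Here $S\subseteq V(G)$ and $|S|\le k-1$, so $G-S$ is connected by the $k$-connectivity of $G$. It is also nonempty: either $|V(G)|\ge k+1>|S|$, or $G$ is trivial and $S=\emptyset$.

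It remains to attach $x$ in the second case. Since $x$ has at least $k$ neighbours in $V(G)$ and $|S|\le k-1$, at least one neighbour of $x$ survives in $G-S$. Therefore $x$ is joined to the connected graph $G-S$, and $G'-S$ is connected.

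No step is a genuine obstacle. The only care needed is in the degenerate small cases: the order bound $|V(G')|\ge k+1$ and the trivial-graph convention adopted in the paper.
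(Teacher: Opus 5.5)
Your proof is correct and follows the paper's argument: you split on whether $x\in S$, use the $k$-connectivity of $G$ in both cases, and use $|N_{G'}(x)|\ge k>|S|$ to attach $x$ when $x\notin S$. Your check that $|V(G')|\ge k+1$ (so $G'-S$ is never a single vertex) and your handling of the trivial-graph convention are small details the paper's proof leaves implicit.
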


\begin{proof}
    Take any $S\subseteq V(G')$ with $|S|\le k-1$. We show that $G'-S$ is connected. First suppose that $x\in S$. Then $G'-S=G-(S\setminus \{x\})$. Since $|S\setminus \{x\}|\le k-2<k$ and $G$ is $k$-connected, $G'-S$ is connected. Now suppose that $x\notin S$. Since $|S|\le k-1$ and $S\subseteq V(G)$, the graph $G-S$ is connected. Then $G'-S$ contains $G-S$ as a connected subgraph. Moreover, since $|N_{G'}(x)|\ge k$ and $S$ has size at most $k-1$, the vertex $x$ has at least one neighbor in $G-S$. Thus, $G'-S$ is connected. Therefore, $G'$ is $k$-connected. 
\end{proof}

\section{Proof of Theorem~\ref{thm:ham-path}}\label{sec3}

We prove Theorem~\ref{thm:ham-path} in this section.

\begin{proof}[Proof of Theorem~\ref{thm:ham-path}]
Let $u,v\in V(G)$ be any two distinct vertices. Since $\delta(G)\ge \lceil \frac{n+1}{2}\rceil$, Lemma~\ref{lem:hconn} implies that $G$ is Hamiltonian-connected. Hence there exists a Hamiltonian $(u,v)$-path, denoted by $P_0$, in $G$. If $\kappa(G-E(P_0))\ge k$, then we are done by letting $P=P_0$. Thus we assume that $G-E(P_0)$ is not $k$-connected. Let $G'=G-E(P_0)$. Also, let $k'=\kappa(G')<k$ and $W=\{w_1,w_2,\ldots, w_{k'}\}\subseteq V(G')$
such that $G'-W$ is disconnected.

\vspace{0.1cm}
\noindent \textbf{Claim 1.}
    The graph $G'-W$ has exactly two connected components.

\vspace{0.1cm}
\noindent \textbf{Proof of Claim 1.} Since $\Delta(P_0)\le 2$, we have $\delta(G')\ge \delta(G)-2$. Let $G_1, G_2, \ldots, G_r$ be the connected components of $G'-W$ such that $|V(G_1)|\le |V(G_2)|\le \cdots \le |V(G_r)|$. For any $x\in V(G_i)$, where $1\le i \le r$, $x$ has no neighbor in $V(G)\setminus (V(G_i)\cup W)$. Thus we obtain for each $i$ with $1\le i \le r$,
$$d_{G'}(x)\le |V(G_i)|-1+|W|,$$
and hence
$$|V(G_i)|\ge d_{G'}(x)-|W|+1\ge \delta(G')-k'+1\ge \lceil\frac{n+1}{2}\rceil -2-k'+1.$$
Therefore,
$$|V(G_i)|\ge \lceil\frac{n+1}{2}\rceil-k'-1=\lfloor\frac{n}{2}\rfloor-k'.$$
Suppose that $r\ge 3$. Then we have $|V(G_1)|\le \frac{n-k'}{3}$. Thus,
$$\lfloor\frac{n}{2}\rfloor-k'\le \frac{n-k'}{3},$$
that is
$$n\le 4k'+3\le 4k-1,$$
which contradicts the assumption that $n\ge 6k+6$. Thus $r=2$.

Let $G_1$ and $G_2$ be the two components of $G'-W$. Since $|V(G_1)|+|V(G_2)|=n-k'$ and $|V(G_i)|\ge \lfloor\frac{n}{2}\rfloor-k'$, we have $|V(G_i)|\le \lceil\frac{n}{2}\rceil$ for $i=1,2$.

\vspace{0.1cm}
\noindent \textbf{Claim 2.}
    There are $k$ vertex-disjoint paths $Q_1, Q_2,\ldots, Q_k$ in $G$ connecting vertices in $V(G_1)$ and vertices in $V(G_2)$ such that $2\le |V(Q_i)|\le 4$ for each $i$ with $1\le i \le k$ and $\bigcup_i^kV_I(Q_i)\subseteq W$.

\vspace{0.1cm}
\noindent \textbf{Proof of Claim 2.} We first know that $V(G)=V(G_1)\cup W\cup V(G_2)$ is a disjoint union. For any $w\in W$, we have
$$ \begin{aligned}
d_G(w) &=|N_G(w)\cap V(G_1)|+|N_G(w)\cap V(G_2)|+|N_G(w)\cap W| \\
 &\le |N_G(w)\cap V(G_1)|+|N_G(w)\cap V(G_2)|+(|W|-1),
\end{aligned}
$$
and hence
$$
|N_G(w)\cap V(G_1)|+|N_G(w)\cap V(G_2)| \ge d_G(w)-(|W|-1)\ge \delta(G)-k'+1.
$$
Since $\delta(G)\ge \lceil\frac{n+1}{2}\rceil$, $n\ge 6k+6$ and $k'\le k-1$, we get 
$$
|N_G(w)\cap V(G_1)|+|N_G(w)\cap V(G_2)| \ge3k+4-(k-1)+1 =2k+6.
$$
Thus, each $w\in W$ has at least $k+1$ neighbors in $G_1$ or at least $k+1$ neighbors in $G_2$.

By the $k$-connectivity of $G$, Menger's theorem gives $k$ pairwise vertex-disjoint paths $P_1, P_2,\ldots, P_k$ connecting $G_1$ and $G_2$. If $P_i$ has no vertex in $W$, then $P_i$ has a subpath $P_i'=(u_i,v_i)$ of order $2$, where $u_i\in V(G_1)$ and $v_i\in V(G_2)$. If $P_i$ has a vertex in $W$, then $P_i$ has either a subpath $P_i'=(u_i, v_i)$ or $P_i'=(u_i,w_{i,1},w_{i,2},\ldots, w_{i,r_i},v_i)$, where $u_i\in V(G_1), w_{i,s}\in W$ and $v_i\in V(G_2)$. Note that these subpaths are still vertex-disjoint. Thus, without loss of generality, we may assume that $|V(P_i')|=2$ for each $1\le i\le k-l$ and $|V(P_i')|\ge 3$ for each $k-l+1\le i\le k$. Let $\Pi =\{P_1', P_2',\ldots, P_k'\}$. Also let $U=\{u_1, u_2,\ldots, u_k\}$ and $V=\{v_1,v_2,\ldots,v_k\}$.

Consider a path $P_i'=(u_i,w_{i,1},w_{i,2},\ldots, w_{i,r_i},v_i)$, where $r_i\ge 2$, that is, $|V(P_i')|\ge 4$. If $|N_G(w_{i,2})\cap V(G_1)|\ge k+1$, then we can select a vertex $u_i'\in (N_G(w_{i,2})\cap V(G_1))\setminus U$ and obtain a path $P_i''=(u_i',w_{i,2},\ldots, w_{i,r_i},v_i)$ which is vertex-disjoint with any path in $\Pi \setminus \{P_i'\}$. If $|N_G(w_{i,2})\cap V(G_2)|\ge k+1$, then we can select a vertex $v_i'\in (N_G(w_{i,2})\cap V(G_2))\setminus V$ and obtain a path $P_i''=(u_i, w_{i,1},w_{i,2},v_i')$. After finite times of replacement, we may assume, without loss of generality, that $3\le|V(P_i')|\le 4$ for any $k-l+1\le i\le k$ and $|V(P_i')|=2$ for any $1\le i \le l$. Moreover, we have $\bigcup_i^k V_I(P_i')\subseteq W$. Therefore, Claim 2 holds.

Let $S=\bigcup_{i=1}^k V_I(Q_i)$. Then $S\subseteq W$. Let $B=G[\bigcup_{i=1}^{k}E(Q_i)]$. Since $Q_1, Q_2,\ldots, Q_k$ are pairwise vertex-disjoint, we have $\Delta(B)\le 2$ and $|V(B)|=2k+|S|\le 2k+|W|=2k+k'\le 3k-1$.

\vspace{0.1cm}
\noindent \textbf{Claim 3.}
    $G-E(B)$ is Hamiltonian-connected.

\vspace{0.1cm}

\noindent \textbf{Proof of Claim 3.} Let $G''=G-E(B)$. For any vertex $x\in V(G'')$, if $x\notin V(B)$, then $d_{G''}(x)=d_G(x)\ge \lceil\frac{n+1}{2}\rceil$; if $x\in V(B)$, then $d_{G''}(x)\ge \lceil\frac{n+1}{2}\rceil-2$. Hence, the vertices of $G''$ with degree not exceeding $\frac{n}{2}$ are all contained in $V(B)$. Thus, for $2\le j\le \frac{n}{2}$, we have
$$\psi_j(G'')\le |V(B)|\le 3k-1.$$
More accurately, we have $\psi_j(G'')=0$ with $j<\lceil\frac{n+1}{2}\rceil-2$ and $\psi_j(G'')\le 3k-1<j-1$ with $\lceil\frac{n+1}{2}\rceil-2\le j\le \frac{n}{2}$, since $n\ge 6k+6$. By Lemma~\ref{lem:hconn}, $G''$ is Hamiltonian-connected. 

Therefore, for any $u,v\in V(G'')$, there exists a Hamiltonian path $P$ connecting $u$ and $v$. Since $E(P)\subseteq G-E(B)$, we have $E(B)\subseteq G-E(P)$. 

\vspace{0.1cm}
\noindent \textbf{Claim 4.}
    $G-E(P)$ is $k$-connected.

\vspace{0.1cm}
\noindent \textbf{Proof of Claim 4.} Let $G^*=G-E(P)$. Since $E(B)\subseteq G^*$, $G^*$ contains $k$ pairwise vertex-disjoint paths $Q_1, Q_2,\ldots, Q_k$. For $i=1,2$, let $G_i^*=G^*[V(G_i)]$. We claim that $G_i^*$ is $k$-connected. For any $x\in V(G_i)$, since $G_i$ is a component of $G'-W=(G-E(P_0))-W$, the edges incident with $x$ to other component in $G$ come only from $P_0$. Thus,
$$|N_G(x)\cap V(G_i)|\ge \delta(G)-2-k'.$$
After removing $E(P)$, we have
$$d_{G_i^*}(x)\ge \delta(G)-2-k'-2\ge \lceil\frac{n+1}{2}\rceil -k-3.$$
Since $|V(G_i)|\le \lceil \frac{n}{2}\rceil$ and $n\ge 6k+6$, it is easy to verify that 
$$\lceil\frac{n+1}{2}\rceil -k-3 \ge \frac{|V(G_i)|+k-2}{2},$$
and hence $\delta(G_i^*)\ge \frac{|V(G_i)|+k-2}{2}$. Therefore $G_i^*$ is $k$-connected by Lemma~\ref{lem:CH}.

Let $F=G^*[V(G_1)\cup S \cup V(G_2)]$. We know that $F$ contains the two $k$-connected graphs $G^*[V(G_1)]$ and $G^*[V(G_2)]$, together with the $k$ pairwise internally disjoint paths $Q_1, \ldots, Q_k$ connecting $V(G_1)$ and $V(G_2)$. For any $w\in S$, we have
$$
|N_G(w)\cap (V(G_1)\cup V(G_2))| \ge \delta(G)-(|W|-1) \ge \delta(G)-k'+1.
$$
After deleting $E(P)$, we have
$$
|N_{G^*}(w)\cap (V(G_1)\cup V(G_2))| \ge \delta(G)-k'+1-2 \ge \left\lceil\frac{n+1}{2}\right\rceil-k \ge k.
$$
Hence, each $w\in S$ has at least $k$ neighbors belonging to $V(G_1)\cup V(G_2)$ in $G^*$. It follows from Lemma~\ref{lem:two-block} that $F$ is $k$-connected.

Let $W_0=W\setminus S$. If $W_0=\emptyset$, then we are done. Let $w\in W_0$, the same estimate as above gives
$$
|N_{G^*}(w)\cap (V(G_1)\cup V(G_2))| \ge \delta(G)-k'+1-2 \ge \left\lceil\frac{n+1}{2}\right\rceil-k \ge k.
$$
Since $V(G_1)\cup V(G_2)\subseteq V(F)$, the vertex $w$ has at least $k$ neighbors in $F$. By Lemma \ref{lem:add vertex}, $G^*[V(F)\cup \{w\}]$ is $k$-connected. Repeat this process for all vertices in $W_0$. Then we conclude that $G^*[V(F)\cup W_0]$ is $k$-connected. Therefore, the graph $G^*$ is $k$-connected. 
That is, $\kappa(G-E(P))\ge k$. The theorem follows.
\end{proof}

\section{Proof of Theorem~\ref{thm:path-system}}\label{sec4}

We give the proof of Theorem~\ref{thm:path-system} in this section.

\begin{proof}[Proof of Theorem~\ref{thm:path-system}]
For any $u,v\in V(G)$ and an integer $s$ with $1\le s\le t$, we have $\max\{s,2\}\le t$ since $t\ge 3$. We divide the discussion into two cases.

\noindent \textbf{Case 1.} $\delta(G)=n-1$.

In this case, $G=K_n$. We claim that $n-2\ge s$. If $t=3$, then $n\ge 6k+1\ge 13$, and hence $n-2\ge 11\ge s$. If $t\ge 4$, then $n\ge 6k+7t-17\ge 7t-5$, and hence $n-2\ge 7t-7\ge t\ge s$. Thus, $V(G)\setminus \{u,v\}$ results in $s$ nonempty sets $X_1,X_2,\ldots, X_s$, and there is a path from $u$ to $v$ with $V_I(P_i)=X_i$ for each $i=1,2,\ldots, s$ as $G=K_n$. Then $P_1,P_2,\ldots, P_s$ are $s$ internally vertex-disjoint $(u,v)$-paths. Let $\mathcal{P}=P_1\cup P_2\cup \cdots \cup P_s$. We have $V(\mathcal{P})=V(G)$ and $\Delta(\mathcal{P})=\max\{s,2\}\le t$. So,
$$\delta(G-E(\mathcal{P}))\ge n-1-t.$$
To prove that $G-E(P)$ is $k$-connected, it is enough, by Lemma~\ref{lem:CH}, to verify 
$$n-1-t\ge \frac{n+k-2}{2},$$
which is equivalent to $n\ge k+2t$. If $t=3$, then 
$$n-(k+2t)\ge (6k+1)-(k+6)=5k-5>0.$$ 
If $t\ge 4$, then 
$$n-(k+2t)\ge (6k+7t-17)-(k+2t)=5k+5t-17>0.$$
Thus $n\ge k+2t$ in both cases. Therefore, $G-E(\mathcal{P})$ is $k$-connected.

\noindent \textbf{Case 2.} $\delta(G)\le n-2$.

By the minimum degree condition, we have $\delta(G)\ge \frac{n}{2}+1$. Lemma~\ref{lem7} implies that 
$$\kappa^*(G)\ge 2\delta(G)-n+2.$$
If $t=3$, then
$$2\delta(G)-n+2\ge 2\lceil \frac{n+6}{2}\rceil -n+2\ge 8\ge t.$$
If $t\ge 4$, then
$$2\delta(G)-n+2\ge 2\lceil \frac{n+t+2}{2}\rceil -n+2\ge t+4\ge t.$$
Hence, there are $s$ internally disjoint $(u,v)$-paths $P_1,P_2,\ldots, P_s$ satisfying $V(P_1\cup P_2\cup \cdots \cup P_s)=V(G)$, where $1\le s\le t$. Let $\mathcal{P}_0=P_1\cup P_2\cup \cdots \cup P_s$. We have  $\Delta(\mathcal{P}_0)=\max\{s,2\}\le t$. If $\kappa(G-E(\mathcal{P}_0))\ge k$, then we are done. Assume that $\kappa(G-E(\mathcal{P}_0))< k$. Let $G'=G-E(\mathcal{P}_0)$. Also, let $k'=\kappa(G')<k$ and $W=\{w_1,w_2,\ldots, w_{k'}\}\subseteq V(G')$
such that $G'-W$ is disconnected.

\vspace{0.1cm}
\noindent \textbf{Claim 1.}
    The graph $G'-W$ has exactly two connected components. 

\vspace{0.1cm}   
\noindent \textbf{Proof of Claim 1.} Since $\Delta(\mathcal{P}_0)\le t$, we have $\delta(G')\ge \delta(G)-t$. Let $G_1, G_2, \ldots, G_r$ be the connected components of $G'-W$ such that $|V(G_1)|\le |V(G_2)|\le \cdots \le |V(G_r)|$. For any $x\in V(G_i)$, where $1\le i \le r$, $x$ has no neighbor in $V(G)\setminus (V(G_i)\cup W)$. Thus we obtain for each $i$ with $1\le i \le r$,
$$d_{G'}(x)\le |V(G_i)|-1+|W|,$$
and hence
$$
|V(G_i)| \ge d_{G'}(x)-|W|+1\ge \delta(G')-k'+1\ge \delta(G) -t-k'+1.
$$
If $t=3$, then
$$|V(G_i)|\ge \lceil \frac{n+6}{2}\rceil-3-k'+1=\frac{n}{2}-k'+1\ge \frac{n}{2}-\frac{t}{2}-k'+2.$$
If $t\ge 4$, then
$$|V(G_i)|\ge \lceil \frac{n+t+2}{2}\rceil-t-k'+1=\frac{n}{2}-\frac{t}{2}-k'+2.$$
Suppose that $r\ge 3$. Then we have $|V(G_1)|\le \frac{n-k'}{3}$. Thus,
$$\frac{n}{2}-\frac{t}{2}-k'+2\le \frac{n-k'}{3},$$
that is
$$n\le 3t+4k'-12\le 3t+4k-16,$$
which contradicts the assumption that $n\ge 6k+1$ for $t=3$ and that $n\ge  6k+7t-17$ for $t\ge 4$. Thus $r=2$.

Let $G_1$ and $G_2$ be the two components of $G'-W$. Since $|V(G_1)|+|V(G_2)|=n-k'$ and $|V(G_i)|\ge \delta(G)-t-k'+1$, we have $|V(G_i)|\le n-\delta(G)+t-1$ for $i=1,2$.

\vspace{0.1cm}
\noindent \textbf{Claim 2.}
    There are $k$ vertex-disjoint paths $Q_1, Q_2,\ldots, Q_k$ in $G$ connecting vertices in $V(G_1)$ and vertices in $V(G_2)$ such that $2\le |V(Q_i)|\le 4$ for each $i$ with $1\le i \le k$ and $\bigcup_i^kV_I(Q_i)\subseteq W$.

\vspace{0.1cm}
\noindent \textbf{Proof of Claim 2.} We know that $V(G)=V(G_1)\cup W\cup V(G_2)$ is a disjoint union. For any $w\in W$, we have
$$ \begin{aligned}
    d_G(w) &=|N_G(w)\cap V(G_1)|+|N_G(w)\cap V(G_2)|+|N_G(w)\cap W| \\
    & \le |N_G(w)\cap V(G_1)|+|N_G(w)\cap V(G_2)|+(|W|-1),
\end{aligned}
$$
and hence
$$ \begin{aligned}
|N_G(w)\cap V(G_1)|+|N_G(w)\cap V(G_2)| &\ge d_G(w)-(|W|-1)\\ 
&\ge \delta(G)-k'+1 \\
&\ge \delta(G)-k+2.
\end{aligned}
$$
If $t=3$, then
$$
\begin{aligned}
|N_G(w)\cap V(G_1)|+|N_G(w)\cap V(G_2)|& \ge \lceil \frac{n+6}{2} \rceil-k+2\\
&\ge \lceil \frac{6k+7}{2}\rceil -k+2\\
&\ge 2k+6.
\end{aligned}
$$
If $t\ge 4$, then
$$
\begin{aligned}
|N_G(w)\cap V(G_1)|+|N_G(w)\cap V(G_2)|& \ge \lceil \frac{n+t+2}{2} \rceil-k+2\\
&\ge \lceil \frac{6k+8t-15}{2}\rceil -k+2\\
&\ge 2k+11.
\end{aligned}
$$
Thus, each $w\in W$ has at least $k+1$ neighbors in $V(G_1)$ or at least $k+1$ neighbors in $V(G_2)$. The remaining proof process is the same as that of Claim~2 in Theorem~\ref{thm:ham-path}. Hence, Claim 2 holds.

Let $S=\bigcup_{i=1}^k V_I(Q_i)$. Then $S\subseteq W$. Let $B=G[\bigcup_{i=1}^{k}E(Q_i)]$. Since $Q_1, Q_2,\ldots, Q_k$ are pairwise vertex-disjoint, we have $\Delta(B)\le 2$ and $|V(B)|=2k+|S|\le 2k+|W|=2k+k'\le 3k-1$.

\vspace{0.1cm}
\noindent \textbf{Claim 3.}
    There are $s$ internally disjoint $(u,v)$-paths $P_1,P_2,\ldots, P_s$ in $G-E(B)$ such that $V(P_1\cup P_2\cup \cdots \cup P_s)=V(G)$, where $1\le s\le t$.

\vspace{0.1cm}

\noindent \textbf{Proof of Claim 3.} Let $G''=G-E(B)$. Since $\Delta(B)\le 2$, we have $\delta(G'')\ge \delta(G)-2$. The minimum degree condition guarantees $\delta(G'')\ge \frac{n}{2}+1$.
Thus, Lemma~\ref{lem7} applies to $G''$. Moreover,
$$2\delta(G'')-n+2\ge 2(\delta(G)-2)-n+2=2\delta(G)-n-2.$$
If $t=3$, then
$$2\delta(G'')-n+2\ge 2\lceil \frac{n+6}{2}\rceil -n-2\ge 4\ge t.$$
If $t\ge 4$, then
$$2\delta(G'')-n+2\ge 2\lceil \frac{n+t+2}{2}\rceil -n-2\ge t.$$
Therefore, $\kappa^*(G'')\ge t$, and $G''$ contains $s$ internally disjoint $(u,v)$-paths $P_1,\ldots,P_s$ whose union spans $G$ for any $s$ with $1\le s\le t$.

Let $\mathcal{P}=P_1\cup \cdots \cup P_s$. Since $E(\mathcal{P})\subseteq G-E(B)$, we have $E(B)\subseteq G-E(\mathcal{P})$.

\vspace{0.1cm}
\noindent \textbf{Claim 4.}
    $G-E(\mathcal{P})$ is $k$-connected.

\vspace{0.1cm}
\noindent \textbf{Proof of Claim 4.} Let $G^*=G-E(\mathcal{P})$. Since $E(B)\subseteq G^*$, $G^*$ contains $k$ pairwise vertex-disjoint paths $Q_1, Q_2,\ldots, Q_k$. For $i=1,2$, let $G_i^*=G^*[V(G_i)]$. We claim that $G_i^*$ is $k$-connected. For any $x\in V(G_i)$, since $G_i$ is a component of $G'-W=(G-E(\mathcal{P}_0))-W$, the edges incident with $x$ to other component in $G$ come only from $\mathcal{P}_0$. Thus,
$$|N_G(x)\cap V(G_i)|\ge \delta(G)-t-k'.$$
After removing $E(\mathcal{P})$, we have
$$d_{G_i^*}(x)\ge \delta(G)-t-k'-t\ge \delta(G) -2t-k+1.$$
On other hand, since $|V(G_i)|\le n-\delta(G)+t-1$, it is enough to verify 
$$\delta(G)-2t-k+1\ge \frac{n-\delta(G)+t+k-3}{2},$$
which is equivalent to verify
$$3\delta(G)\ge n+5t+3k-5.$$
If $t=3$, the desired inequality becomes $3\delta(G)\ge n+3k+10$, and it follows from $n\ge 6k+1$. If $t\ge 4$, then $\delta(G)\ge \lceil\frac{n+t+2}{2}\rceil$, and it still follows from $n\ge 6k+6t-17$.
Thus, we get 
$$\delta(G^*[V(G_i)])\ge \frac{|V(G_i)|+k-2}{2}.$$
Therefore, by Lemma~\ref{lem:CH}, $G_i^*$ is $k$-connected for $i=1,2$.

Consider the subgraph $F=G^*[V(G_1)\cup S \cup V(G_2)]$. We know that $F$ contains the two $k$-connected graphs $G^*[V(G_1)]$ and $G^*[V(G_2)]$, together with the $k$ pairwise internally disjoint paths $Q_1, \ldots, Q_k$ connecting $V(G_1)$ and $V(G_2)$. For any $w\in S$, we have
$$
|N_G(w)\cap (V(G_1)\cup V(G_2))| \ge \delta(G)-(|W|-1) \ge \delta(G)-k'+1.
$$
After deleting $E(\mathcal{P})$, we have
$$
|N_{G^*}(w)\cap (V(G_1)\cup V(G_2))| \ge \delta(G)-k'+1-t \ge \delta(G)-k-t+2.
$$
If $t=3$, then
$$\delta(G)-k-t+2\ge 2\lceil \frac{n+6}{2}\rceil -k-1\ge 2k+3\ge k.$$
If $t\ge 4$, then
$$\delta(G)-k-t+2\ge 2\lceil \frac{n+t+2}{2}\rceil -k-t+2\ge 2k+3t-5\ge k.$$
Hence, each $w\in S$ has at least $k$ neighbors belonging to $V(G_1)\cup V(G_2)$ in $G^*$. It follows from Lemma~\ref{lem:two-block} that $F$ is $k$-connected.

Let $W_0=W\setminus S$. If $W_0=\emptyset$, then we are done. Let $w\in W_0$, the same estimate as above gives
$$|N_{G^*}(w)\cap (V(G_1)\cup V(G_2))|\ge k.$$
Since $V(G_1)\cup V(G_2)\subseteq V(F)$, the vertex $w$ has at least $k$ neighbors in $F$. By Lemma \ref{lem:add vertex}, $G^*[V(F)\cup \{w\}]$ is $k$-connected. Repeat this process for all vertices in $W_0$. Then we conclude that $G^*[V(F)\cup W_0]$ is $k$-connected. Therefore, the graph $G^*$ is $k$-connected. 
That is, $\kappa(G-E(\mathcal{P}))\ge k$. The theorem follows.
\end{proof}

\section{Concluding Remarks}\label{sec5}

We have proved two connectivity-preserving results for spanning path structures. The first shows that, in sufficiently large $k$-connected graphs, the degree condition $\delta(G)\ge \lceil\frac{n+1}{2}\rceil$
guarantees a Hamiltonian path connecting any pair of vertices whose edge deletion preserves $k$-connectivity. The second conclusion gives a corresponding result for internally disjoint spanning $(u,v)$-paths system under a minimum degree condition. It is natural to ask whether the lower bound of the minimum degree condition can be reduced. Thus, we raise the following question.

\begin{problem}
    In Theorem~\ref{thm:path-system}, can the minimum degree condition be lowered to $\delta(G)\ge \lceil\frac{n+t}{2}\rceil$ for all $t\ge 3$?
\end{problem}

\section*{Declaration of competing interest}
 The authors declare that they have no known competing financial interests or personal relationships that could have appeared to influence the work reported in this paper.

\end{document}